\documentclass[11pt,a4paper]{amsart}

\usepackage{graphicx}      
\usepackage{amsmath}
\usepackage{amssymb}
\usepackage{xcolor}
\usepackage{amsthm}
\usepackage{appendix}

\newtheorem{thm}{Theorem}

\newtheorem{lem}[thm]{Lemma}
\newtheorem{prop}[thm]{Proposition}
\newtheorem{assum}[thm]{Assumption}

\theoremstyle{definition}
\newtheorem{defn}[thm]{Definition}
\newtheorem{rem}[thm]{Remark}

\newcommand{\R}{\ensuremath{\mathbb R}}    
\newcommand{\N}{\ensuremath{\mathbb N}}    
\newcommand{\conv}{\operatorname{conv}}
\newcommand{\dist}{\operatorname{dist}}
\newcommand{\calT}{\mathcal T}

\setlength{\oddsidemargin}{1cm}
\setlength{\evensidemargin}{1cm}
\setlength{\textwidth}{147mm}

\begin{document}
\title{Optimal control of thermodynamic port-Hamiltonian Systems}
\author[B.\ Maschke, F.\ Philipp, M.\ Schaller, K.\ Worthmann, T.\ Faulwasser]{Bernhard Maschke$^{1}$, Friedrich Philipp$^{2}$, Manuel Schaller$^{2}$, Karl Worthmann$^{2}$ and Timm Faulwasser$^{3}$}
	\thanks{}
	\thanks{$^{1}$Univ Lyon, Universit{\'e} Claude Bernard Lyon 1, CNRS, LAGEPP UMR 
		5007, France (e-mail: bernhard.maschke@univ-lyon1.fr).}
	\thanks{$^{2}$Technische Universit\"at Ilmemau, Institute for Mathematics, Germany (e-mail: \{friedrich.philipp,manuel.schaller,karl.worthmann\}@tu-ilmenau.de)..}
		\thanks{$^{3}$TU Dortmund University, Institute of Energy Systems, Energy Efficiency and Energy Economics, Germany
			(e-mail: timm.faulwasser@ieee.org).}

	\thanks{{\bf Acknowledgments: }F.\ Philipp was funded by the Carl Zeiss Foundation within the project \textit{DeepTurb---Deep Learning in und von Turbulenz}.	K.\ Worthmann gratefully acknowledges funding by the German Research Foundation (DFG; grant WO\ 2056/6-1, project number 406141926). This research started during a research stay of FP and MS at the group of BM in Lyon. FP and MS thank the University of Lyon and the work group LAGEPP for the warm hospitality. MS further gratefully acknowledges funding by the French Embassy in Germany by means of a PROCOPE mobility grant. }.

\begin{abstract}
We consider the problem of minimizing the entropy, energy, or exergy production for state transitions of irreversible
port-Hamiltonian systems subject to control constraints. Via a
dissipativity-based analysis we show that optimal solutions exhibit the
manifold turnpike phenomenon with respect to the manifold of thermodynamic
equilibria. We illustrate our analytical findings via numerical results for 
a heat exchanger.

\smallskip
\noindent \textbf{Keywords.}        port-Hamiltonian systems, irreversible thermodynamic systems, optimal control, manifold turnpike
\end{abstract}

\maketitle

\section{Introduction}

The Hamiltonian formulation of controlled thermodynamic systems is a very active research area with various considered settings ranging from port-Hamiltonian (pH) systems defined on contact manifolds \cite{Eberard_RMP07,Favache_IEEE_TAC_09,Favache_ChemEngSci10,Ramirez_SCL13,Ramirez_IEEE_TAC_17}, symplectic manifolds \cite{Entropy_2018,Maschke_IFAC_NOLCOS_19_HomHamContr} to dissipative Hamiltonian (or gradient-Hamiltonian) systems such as GENERIC \cite{Oettinger_PhysRevE_06,Hoang_JPC_11,Hoang_JPC_2012}.

With respect to control of thermodynamic pH systems, there are several
papers considering passivity-based feedback stabilization  using thermodynamic
potentials, e.g., the internal energy, eventually augmented with the mechanical or electromagnetic energy
or the exergy \cite{Sangi_Energy_19_Review2ndLawControl,Sieniutycz_PhysRep00_HJBContrExergy}, the availability function \cite{Hoang_JPC_2012,Hoang_JPC_11,GarciaSandoval_JPC_17}, or the entropy creation function associated with the irreversible phenomena \cite{Ramirez_Automatica16}- 
The works mentioned above consider classical approaches for pH systems such as passivity-based damping assignment (IDA-PBC), energy function or entropy creation shaping and they do not investigate optimal control. 

However, outside the realm of pH systems, there is a series of papers considering dynamic control based on thermodynamic potentials such as the availability function \cite{Alonso96,Ydstie02,Ruszkowski05,Wang_LHMNLC15}, and also the entropy creation \cite{GarciaSandoval_ChemEngSci_16,GarciaSandoval_JPC_17}. Moreover, there is some effort regarding  entropy optimization at steady state for distributed parameter systems  \cite{Johannessen_Energy_04_OptContrEntropy}.

In this work, we consider optimal control of irreversible pH systems \cite{Ramirez_EJC13,Ramirez_ChemEngSci13} accounting both for the energy conservation and the entropy creation.
Thereby, we extend our previous works on minimzing the energy supply in the linear pH setting \cite{Schaller2020a,Philipp2021,Faulwasser2021} to the class of nonlinear irreversible pH systems, that is, we aim for a state transition with minimal energy, entropy, or exergy supply, respectively linear combinations thereof. We prove that, under suitable assumptions, a state transition that is optimal with respect to these metrics is always performed in a neighborhood of the set of thermodynamical equilibria for the majority of the time. We embed this intuitive property into the framework of manifold turnpike properties of optimal control problems.\\

\noindent\textbf{Notation.} We denote the gradient of a scalar-valued function $F:\R^n \to \R$ by $\frac{\partial F}{\partial x}$ or just $F_x$ and define the {\em Poisson bracket} $\{S,H\}_J$ of two functions $S,H : \mathbb R^n\to\mathbb R$ with respect to a matrix $J\in\mathbb{R}^{n\times n}$ by $\{S,H\}_{J}(x) = S_x(x)^\top J H_x(x)$.

\section{Irreversible port-Hamiltonian systems}

\noindent We commence our analysis by introducing the considered class of systems. The state space is given by $\R^n$, $n\in \N$, and, as usual in pH Systems, the input and output spaces coincide and are given by $\R^m$, $m \in \N$.

The definition of an irreversible pH system (IPHS) was introduced
by~\cite{Ramirez_ChemEngSci13}, and we slightly adapt it to our setting.
\begin{defn}\label{def:RIPHS}
	An \emph{irreversible port-Hamiltonian system} 
	is defined by the \textit{state equation}
	\begin{align}\label{eq:RIPHS}\tag{IPHS}
	\begin{split}
	\dot{x}(t)=\gamma(x(t),H_x(x(t)))\{S,H\}_{J}(x(t))J H_x(x(t))+g(x,H_x(x))u(t).
	\end{split}
	\end{align}
	with a skew-symmetric structure matrix $J \in \mathbb{R}^{n\times n}$, a strictly positive continuous function $\gamma : \mathbb R^{2n}\to\mathbb R$, 
	\begin{itemize}
		\item[(i)] 
		a continuously differentiable \emph{non-negative Hamiltonian 
			function} $H: \R^n \to \R_0^+$, and  
		\item[(ii)] 
		an \emph{entropy 
			function} $S: \R^n \to \R $. 
	\end{itemize}
\end{defn}
The system is completed with two output variables, the
energy-conjugated output $y_H$ and the entropy-conjugated output $y_S$ defined by
\begin{align}\label{eq:outputs}
y_H := g(x,H_x)^\top H_x \quad\text{ and }\quad y_S := g(x,H_x)^\top S_x.
\end{align}
Direct calculations show that every trajectory~$x$ satisfying the dynamics~\eqref{eq:RIPHS} obeys the energy and entropy balance
\begin{align}\label{eq:balance}
\begin{split}
\tfrac{\text{d}}{\text{d}t}H(x(t)) &= y_H(t)^\top u(t),\\
\tfrac{\text{d}}{\text{d}t}S(x(t)) &= \gamma(x(t),H_x(x(t))) \{S,H\}_J^2(x(t)) + y_S(t)^\top u(t).
\end{split}
\end{align}
Here, $y_H(t)^\top u(t)$ represents the energy flow, i.e., the power supplied to/extracted from the system, whereas $y_S(t)^\top u(t)$ is to be interpreted as the entropy flow injected into/drawn from the system. For closed systems, i.e., $u\equiv 0$, it can be directly inferred from the equations in~\eqref{eq:balance} and the positivity of~$\gamma$ that energy is preserved and entropy is non-decreasing. Hence, the two fundamental laws of thermodynamics hold. In particular, the entropy balance captures the irreversible nature of~\eqref{eq:RIPHS}.

\subsection{Example: Heat exchanger}\label{subsec:heat}

In this subsection, we introduce the heat exchanger as depicted in Figure~\ref{fig:heat} in order to motivate the problem formulation and to illustrate our results. The example is slightly adapted from \cite{RamirezThesis:2012}.
\begin{figure}[htb]
	\includegraphics[width=0.6\columnwidth]{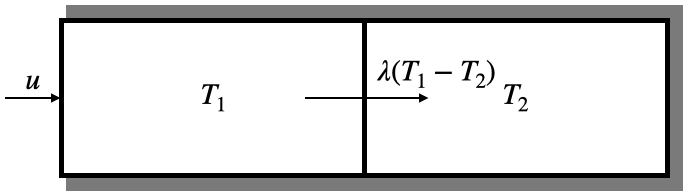}
	\caption{Model of a heat exchanger with two compartments}\label{fig:heat}
\end{figure}


The thermodynamic properties of each compartment, under the assumption that the walls are non deformable and impermeable, are given by the relation between temperature and entropy
$$
T_i(S_i) = T_\text{ref}\cdot e^{(S_i-S_\text{ref})/c_i}, \qquad i=1,2,
$$
where $S_\text{ref}\in \mathbb{R}$ is a reference entropy corresponding to the reference temperature $T_\text{ref}$ and $c_i$, $i=1,2$, are heat capacities, cf.\ \cite[Section 2.2]{Couenne06}. The energy in each compartment, denoted by $H_i(S_i)$, $i=1,2$, can be obtained by integrating Gibbs' equation $\text{d}H_i = T_i\text{d} S_i$ as a primitive of the function $T_i(S_i)$, $i=1,2$.

The state vector of the IPHS is composed of the entropies of the compartments $x=(S_1,S_2)^\top$ and the total energy (entropy) is given by the sum of the energies (entropies, resp.) in the compartments, i.e.,
\begin{align*}
H(x) = H_1(S_1) + H_2(S_2),\quad  S(x) = S_1+S_2 = (1,1)^\top x.
\end{align*}
Consider first the situation where the two compartments are isolated and the dynamics only arise from the heat flow through the wall separating the two compartments. Due to Fourier’s law, the heat flow is given by
\begin{align}
\label{e:fourier}
\dot{Q} = \lambda(T_1-T_2),
\end{align}
where $\lambda> 0$ is a heat conduction coefficient.
By continuity of the heat flux, we have
\begin{align*}
\dot{Q} = -\tfrac{\text{d}}{\text{d}t} H_1(S_1(t)) = \tfrac{\text{d}}{\text{d}t}H_2(S_2(t))
\intertext{and hence}
\lambda(T_1-T_2)=-T_1\tfrac{\text{d}}{\text{d}t}S_1(t) = T_2\tfrac{\text{d}}{\text{d}t}S_2(t),
\end{align*}
which yields the entropy balance equations for each compartment, written as follows
\begin{align*}
\tfrac{\text{d}}{\text{d}t} \begin{pmatrix}
S_1(t)\\
S_2(t)
\end{pmatrix}
= \lambda\left(\tfrac{1}{T_2(t)}-\tfrac{1}{T_1(t)}\right)\underbrace{\begin{pmatrix}
	0&-1\\1&0
	\end{pmatrix}}_{=:J}
\begin{pmatrix}
T_1(t)\\
T_2(t)
\end{pmatrix},
\end{align*}
hereby matching the definition of the drift term of $\eqref{eq:RIPHS}$ with
\begin{align*}
\gamma(x,\tfrac{\partial H}{\partial x}) = \tfrac{\lambda}{T_1T_2}, \qquad \{S,H\}_J =\begin{pmatrix}
1&1
\end{pmatrix} J \begin{pmatrix}
T_1\\
T_2
\end{pmatrix} = T_1-T_2.
\end{align*}
\textbf{Entropy flow control.} The canonical choice of an input would be to consider the entropy flowing into or out of compartment one. In this case, we have
\begin{align}\label{eq:entcont}
\frac{\text{d}}{\text{d}t} \begin{pmatrix}
S_1\\
S_2
\end{pmatrix}  &= \gamma(x,H_x) \{S,H\}_J(x) J H_x + \begin{pmatrix}
1\\0
\end{pmatrix}u.
\end{align}
\textbf{Control by a thermostat.} The realizable choice is to induce a heat flow through the external wall connecting compartment two to a thermostat at a controlled temperature $T_e$. If, e.g., compartment one is not isolated, the heat flow between this compartment and the environment can be described via
\begin{align*}
\dot{Q}_e = \lambda_e(T_e-T_2),
\end{align*}
with $\lambda_e>0$ being a heat conduction coefficient.
Thus, the energy balance in the first compartment reads
\begin{align*}
T_1\tfrac{\text{d}}{\text{d}t}S_1(t) = -\lambda(T_1(t)-T_2(t)) +  \lambda_e(T_e(t)-T_1(t))
\end{align*}
and we obtain the dynamics
\begin{align}\label{eq:tempcont}
\frac{\text{d}}{\text{d}t} \begin{pmatrix}
S_1\\
S_2
\end{pmatrix}  &= \gamma(x,H_x) \{S,H\}_J(x) J H_x + \lambda_e \begin{pmatrix}
\tfrac{T_e-T_1}{T_1}\\0
\end{pmatrix}.
\end{align}
Note that the input map does not correspond to Definition~\ref{def:RIPHS}, as it is affine in the control: $W(x,H_x)+g(x,H_x)u$ as in \cite{Ramirez_ChemEngSci13}. In the sequel, we shall consider the control problem using as input the entropy flow into compartment one \eqref{eq:entcont} which is related to the thermostat temperature control in \eqref{eq:tempcont} by the state dependent control transformation $u \to \tfrac{u - T_1}{T_1}$. 

\subsection{Optimal Control Problem for IPHS}

\noindent Considering an optimization horizon~$t_f \geq 0$, an initial value~$x^0\in \mathbb{R}^n$, and a terminal region $\Psi \subset \R^n$, we obtain the prototypical Optimal Control Problem (OCP)
\begin{align}\label{eq:phOCP}\tag{phOCP}
\begin{split}
\min_{u\in L^\infty(0,t_f;\mathbb{U})} \int_0^{t_f}\!\!\!&\left[\alpha_1y_H(t) - \alpha_2 T_0y_S(t)\right]^\top u(t)\,\text{d}t \\
\text{s.t. } \eqref{eq:RIPHS},\quad &x(0)=x^0, \quad x(t_f)\in \Psi.
\end{split}
\end{align}
Here, $y_H$ and~$y_S$ are given by~\eqref{eq:outputs}. The set of admissible control values $\mathbb{U} \subset \mathbb{R}^m$ is supposed to contain the origin and to be compact and convex. 
Then, the feasible set, i.e., the set of all admissible control functions, is given by 
\begin{equation*}
\mathcal{U}_{t_f} := \{u \in L^\infty(0,t_f,\mathbb{R}^m) \,|\, u(t) \in \mathbb{U}, x(t_f; x^0,u) \in \Psi\}.
\end{equation*}
In the cost functional, $T_0>0$ is a fixed scalar reference temperature and the coefficients $\alpha_1,\alpha_2 \in [0,1]$ with $\alpha_1 + \alpha_2 = 1$ yield a convex combination of the energy flow and the entropy flow. The three most important cases are the following:
\begin{itemize}
	\item minimal energy supply, i.e., $\alpha_1 = 1$, $\alpha_2=0$,
	\item minimal entropy extraction, i.e., $\alpha_1 = 0$, $\alpha_2=1$,
	\item minimal exergy supply, i.e., $\alpha_1 = \alpha_2 = \frac 12$.
\end{itemize}

Setting $\ell_{\alpha_1,\alpha_2}(x,u) = [\alpha_1y_H - \alpha_2 T_0 y_s]^\top u$ and using the balance equations \eqref{eq:balance}, we obtain the identity
\begin{align}
\begin{split}
\label{eq:energybalance}
\int_0^{t_f} \ell_{\alpha_1,\alpha_2}(x,u)\,\text{d}t &= \alpha_1\left[H(x(t_f))-H(x(0))\right]\\ &+ \alpha_2T_0\left(S(x(0))- S(x(t_f)) + \int_0^{t_f}\gamma(x,\tfrac{\partial H}{\partial x})\{S,H\}^2_J\,\text{d}t\right).
\end{split}
\end{align}

The following proposition directly follows from \cite[Theorem IV.2]{Macki2012}. 

\begin{prop}[Existence of solutions]
	Let $\Psi = \{0\}$, $x^0\in \R^n$ and assume that $\mathcal{U}_{t_f} \neq \emptyset$, i.e., there is a feasible control. If state responses to admissible controls are bounded, i.e., for any $t_f$ there is $\alpha \geq 0$ such that
	\begin{align*}
	\|x(t;x^0,u)\| \leq \alpha \quad \forall u\in \mathcal{U}_T,\,\,\, 0\leq t\leq t_f,
	\end{align*}
	then there exists an optimal control.
\end{prop}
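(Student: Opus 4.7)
The plan is to directly apply the cited Filippov--Cesari type existence theorem (\cite{Macki2012}, Theorem~IV.2) by verifying its hypotheses in the present setting. The dynamics~\eqref{eq:RIPHS} are control-affine, $\dot x = f_0(x) + g(x,H_x)u$ with $f_0(x) = \gamma(x,H_x)\{S,H\}_J(x)J H_x$ continuous in $x$, and the running cost $\ell_{\alpha_1,\alpha_2}(x,u) = [\alpha_1 y_H - \alpha_2 T_0 y_S]^\top u$ is continuous in $x$ and linear in $u$. The control set $\mathbb U$ is compact and convex and the terminal set $\Psi = \{0\}$ is closed, so the basic regularity requirements of the theorem are immediate.

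The crucial structural hypothesis of any Cesari-type existence result is convexity of the augmented velocity set
\[
\tilde F(x) := \bigl\{\bigl(f_0(x)+g(x,H_x)u,\,r\bigr) : u \in \mathbb U,\ r \geq \ell_{\alpha_1,\alpha_2}(x,u)\bigr\}.
\]
In our control-affine setting this holds automatically: for each fixed $x$, the map $u \mapsto \bigl(f_0(x)+g(x,H_x)u,\,\ell_{\alpha_1,\alpha_2}(x,u)\bigr)$ is affine, so it sends the convex set $\mathbb U$ to a convex set, and taking the epigraph in the $r$-coordinate preserves convexity.

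Given the hypothesized uniform bound $\|x(t;x^0,u)\| \leq \alpha$, continuity of $f_0$ and $g$ yields a uniform bound on $\dot x$ over the compact set $\{\|x\| \leq \alpha\}$. Thus admissible trajectories form an equicontinuous, uniformly bounded family; Arzel\`a--Ascoli supplies a uniformly convergent subsequence along any minimizing sequence, while Banach--Alaoglu extracts a weak-$*$ convergent subsequence of the corresponding controls in $L^\infty(0,t_f;\mathbb U)$. Control-affineness then ensures that the limit trajectory solves~\eqref{eq:RIPHS} for the weak-$*$ limit control, and closedness of $\Psi$ transfers the terminal constraint to the limit. The cost is weak-$*$ continuous in $u$ for fixed $x$ by linearity, and joint convergence is handled via uniform convergence of $x$, giving lower semicontinuity of the objective along the minimizing sequence.

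Combining these steps exhibits a minimizer in $\mathcal U_{t_f}$. The only ingredient that does not reduce to a routine consequence of the control-affine structure and the linearity of the cost in $u$ is the uniform state bound, which is precisely the standing hypothesis of the proposition; hence there is no genuine obstacle beyond a careful verification of the hypotheses of \cite{Macki2012}, Theorem~IV.2. If one wished to dispense with the control-affine assumption or allow a nonlinear running cost in $u$, the convexity of $\tilde F(x)$ would become the real difficulty and would typically require a relaxation argument, but this is avoided here.
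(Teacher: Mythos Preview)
Your proposal is correct and follows exactly the approach the paper takes: the paper simply states that the proposition ``directly follows from \cite[Theorem IV.2]{Macki2012}'' without further argument, and you have carefully verified the hypotheses of that Filippov--Cesari result (control-affine dynamics, cost linear in $u$, compact convex $\mathbb U$, closed $\Psi$, convexity of the augmented velocity set, and the assumed uniform state bound). Your write-up thus supplies the details the paper omits.
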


\noindent The assumption of a bounded state response is satisfied if, e.g., one has exponential stability of the uncontrolled dynamics and uniform boundedness of the input map $g$, see \cite[Theorem 2.3]{Saka2021}. 
%

\section{Turnpikes towards the manifold of thermodynamic equilibria}
\noindent To prove the main result of this paper, we will impose the following assumptions on the energy and entropy function.

\begin{assum}\label{as:ham}
	Let the following hold.
	\begin{enumerate}
		\item[(a)] $H\in C^2(\R^n,\R)$,
		\item[(b)] $H_x: \R^n\to\R^n$ is a diffeomorphism,
		\item[(c)] the entropy function is linear in the state, i.e., ${S(x)=l^\top x}$ with some $l\in \R^n$.
	\end{enumerate}
\end{assum}

\noindent We briefly discuss the above assumptions. In thermodynamic pH-systems energy $H(x)$ is mostly non-quadratic and, whereas (b) is satisfied for many thermodynamic pH systems, the norm on the inverse of $H_x$ is usually not uniformly bounded on $\R^n$, cf.\ the heat exchanger in Section~\ref{subsec:heat}. The linearity of the entropy function in the state is satisfied by convention for all common models of IPHS, as the total entropy $S$ or the entropies in the subdomains $S_i$, $i=1,2$ as in the heat exchanger are also considered as a state itself. For various examples we refer to \cite{RamirezThesis:2012,Ramirez_ChemEngSci13}.

We define the set of {\em thermodynamic equilibria} by
\begin{align*}
\mathcal{T} := \big\{x\in \mathbb{R}^n : \gamma(x,H_x(x))\{S,H\}^2_{J}(x)=0\big\} = \big\{x\in\R^n : \{S,H\}_{J}(x)=0\},
\end{align*}
where $\{S,H\}_{J}(x)$ has the physical interpretation of the driving force of the irreversible phenomenon \cite{Ramirez_ChemEngSci13}.
In view of linearity of the entropy function (Assumption~\ref{as:ham}(c)), we compute
\begin{align*}
\{S,H\}_{J}(x) = S_x(x)^\top J H_x(x) =  l^\top J H_x(x) = -H_x(x)^\top Jl
\end{align*}
and hence $ \{S,H\}_{J}(x) = 0$ whenever $H_x(x) \in (Jl)^\perp$, i.e., 
$$
\mathcal{T} = \tfrac{\partial H}{\partial x}^{-1}\left((Jl)^\perp\right).
$$
Further, by differentiating the above expression and due to pointwise invertibility of $H_{xx}$ (Assumption~\ref{as:ham}(b)), the set $\mathcal{T}$ is a manifold. This manifold is $n$-dimensional if $Jl=0$ and $n-1$-dimensional otherwise.

We now recall the manifold turnpike property as introduced by \cite{Faulwasser2021b} in the context of trim manifolds for Lagrangian and Hamiltonian mechanical systems. For thermodynamic IPHS, the manifold of interest is given by the thermodynamic equilibria.
\begin{defn}[Integral state manifold turnpike property]
	Let $\ell\in C^1(\mathbb{R}^{n+m})$, $\varphi\in C^1(\R^n)$, $\Psi\subset\R^n$ be closed and $f\in C^1(\R^{n+m})$. We say that a general OCP of the form
	\begin{align}
	\begin{split}\label{e:lin_OCP}
	\min_{u\in L^1(0,T;\mathbb{U})}\,&\varphi(x(T)) + \int_0^{t_f} \ell(x(t),u(t))\,dt\\
	\text{s.t. }\dot x = &f(x,u),\quad x(0)=x^0,\quad x(t_f)\in\Psi
	\end{split}
	\end{align}
	has the {\em integral state turnpike property} on a set $S_{\rm tp}\subset\mathbb{R}^n$ with respect to a manifold $\mathcal{T}\subset\R^n$, if for all compact $K\subset S_{\rm tp}$ there is a constant $C_K$ such that for all $x^0\in K$ and $t_f>0$, each optimal pair $(x^\star ,u^\star )$ satisfies 
	\begin{align}\label{e:integral_tp}
	\int_0^{t_f}\dist^2\big(x^\star (t),\mathcal{T}\big)\,dt\le C_K.
	\end{align}
	
\end{defn}
The previous definition can be interpreted as follows. As the upper bound in \eqref{e:integral_tp} is bounded uniformly in $t_f$, for large time horizons $t_f$, the positive integrand $\dist^2(x^*(t),\mathcal{T})$ has to be small for the majority of the time. More precisely, for $x^0 \in K\subset S_\text{tp}$ and $\varepsilon > 0$ we have
\begin{align*}
\mu\big(\{t\in [0,T] : \dist(x^\star (t),\calT) > \varepsilon\}\big)\le\tfrac 1{\varepsilon^2}\!\int_0^T\dist^2(x^\star (t),\calT)\,\text{d}t\le\tfrac{C_K}{\varepsilon^2}, 
\end{align*}
where $\mu$ denotes the standard Lebesgue measure. This behavior of optimal trajectories is called {\em measure turnpike}, cf.\ e.g.\ \cite[Definition 2]{FaulGrun22}.

\begin{lem}\label{lem:manifold_distance}
	For any compact subset $K\subset\mathbb{R}^n$, there are positive constants $c,C>0$ such that
	\begin{align*}
	c\operatorname{dist}(x,\mathcal{T})^2 \,\le\, \gamma(x,H_x(x))\{S,H\}^2_{J}(x) \,\le\, C\operatorname{dist}(x,\mathcal{T})^2
	\end{align*}
	for all $x\in K$.
\end{lem}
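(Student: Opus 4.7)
My plan is to combine a simple compactness bound on $\gamma$ with a bi-Lipschitz change of variables $y = H_x(x)$. First, continuity and strict positivity of $\gamma$, together with continuity of $H_x$, give constants $0 < \gamma_{\min} \leq \gamma(x,H_x(x)) \leq \gamma_{\max}$ on $K$, so it suffices to establish the equivalent estimate $c'\dist(x,\calT)^2 \leq \{S,H\}_J(x)^2 \leq C'\dist(x,\calT)^2$ on $K$.

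Set $v := Jl$. If $v = 0$, then $\{S,H\}_J \equiv 0$ and $\calT = \R^n$, so both sides vanish and there is nothing to prove; I henceforth assume $v \neq 0$. By Assumption~\ref{as:ham}(c) one has $\{S,H\}_J(x) = -H_x(x)^\top v$ and $\calT = H_x^{-1}(v^\perp)$. Since $H \in C^2$ and $H_x$ is a diffeomorphism, both $H_x$ and its inverse $\Phi := (H_x)^{-1}$ are $C^1$ and therefore locally Lipschitz. In the coordinates $y := H_x(x)$ the set $\calT$ corresponds to the linear hyperplane $v^\perp$, the Poisson bracket becomes the linear form $-y^\top v$, and one has the elementary identity $\dist(y,v^\perp) = |y^\top v|/\|v\|$. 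The task thus reduces to a bi-Lipschitz comparison between $\dist(x,\calT)$ and $\dist(H_x(x),v^\perp)$.

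To obtain uniform Lipschitz constants, I enlarge $K$ as follows. Since $\calT$ is closed and nonempty and $K$ is compact, $R := \max_{x \in K}\dist(x,\calT) < \infty$, so $\widetilde K := \{z \in \R^n : \dist(z,K) \leq R\}$ is compact and contains, for every $x \in K$, a nearest point $p^* \in \calT$ together with the segment $[x,p^*]$. Letting $L_1$ be a Lipschitz constant for $H_x$ on $\widetilde K$, the projection $y_0$ of $y = H_x(x)$ onto $v^\perp$ satisfies $\|y - y_0\| \leq \|H_x(x) - H_x(p^*)\| \leq L_1 R$, so I can analogously fix a compact set $\widetilde K_y$ containing every segment $[y,y_0]$ for $x \in K$, on which $\Phi$ is Lipschitz with some constant $L_2$.

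Both bounds now follow by the mean value theorem. For the upper bound, $H_x(p^*)^\top v = 0$ yields $|\{S,H\}_J(x)| = |(H_x(x) - H_x(p^*))^\top v| \leq \|v\|\,L_1\,\dist(x,\calT)$. For the lower bound, $\Phi(y_0) \in \calT$ yields $\dist(x,\calT) \leq \|\Phi(y) - \Phi(y_0)\| \leq L_2\,\|y-y_0\| = (L_2/\|v\|)\,|\{S,H\}_J(x)|$. Squaring and multiplying by $\gamma_{\min}$ and $\gamma_{\max}$ produces the claimed estimate. The main obstacle I foresee is entirely bookkeeping: verifying that the enlargements $\widetilde K, \widetilde K_y$ can be chosen uniformly in $x \in K$ so that $L_1, L_2$ depend only on $K$ and the fixed data $H, J, l$; the core analytic content is just that $H_x$ is a $C^1$ diffeomorphism, hence locally bi-Lipschitz, which linearises both the defining functional and the manifold simultaneously.
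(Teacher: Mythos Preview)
Your proof is correct and follows essentially the same route as the paper: factor out $\gamma$ by compactness, then use that $H_x$ is a $C^1$ diffeomorphism to compare $\dist(x,\calT)$ with $\dist(H_x(x),(Jl)^\perp)=|\{S,H\}_J(x)|/\|Jl\|$ via bi-Lipschitz estimates on suitable compact sets. Your explicit enlargements $\widetilde K$ and $\widetilde K_y$ actually make the compactness bookkeeping tighter than the paper's version, which applies its bi-Lipschitz bound (derived only for $x_1,x_2\in K$) to points $H_x^{-1}(z)\in\calT$ that need not lie in $K$.
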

\begin{proof}
	The proof is given in the appendix.
\end{proof}
%
As it is typical for thermodynamic systems, neither the Hessian $H_{xx}$ of the Hamiltonian nor its inverse $H_{xx}(\cdot)^{-1}$ are uniformly bounded. For this reason the norm-like equivalence of Lemma~\ref{lem:manifold_distance} could only be proved on compact sets. 
To apply the above result to optimal trajectories and to render the involved constants uniform in the horizon, we henceforth assume that optimal trajectories are uniformly bounded in the horizon. As we will see later in our heat exchanger example, this property is verified. We comment  on future research w.r.t.\ this assumption in Section~\ref{subsec:disc}.

\begin{assum}\label{as:comp}
	%
	For any compact set of initial values $X^0$, there is a compact set $K\subset \R^n$ such that for all horizons $t_f$, the corresponding optimal state of \eqref{eq:phOCP} with initial datum $x^0\in X^0$ and horizon $t_f$ is contained in $K$, i.e., 
	\begin{align*}
	&\forall\,(x^0,t_f) \in X^0 \times (0,\infty):\\
	&\forall\,u^\star \text{optimal for }  \eqref{eq:phOCP}: x(t;x^0,u^\star) \in K\,\,\forall\, t \in [0,t_f]
	\end{align*}
	
\end{assum}

To prove the turnpike property we utilize the following notation for initial states that can first be steered to the manifold $\mathcal{T}$ and then to the terminal set $\Psi$.
\begin{align*}
\mathcal{C}(\mathcal{T},\Psi) :=\{x^0\in \mathbb{R}^n \,|\,&\exists t_1\geq 0,u_1 \in L^{\infty}(0,t_{1};\mathbb{U}) \text{ s.t. } x(t_1,u_1,x^0) \in \mathcal{T}, \\
&\exists t_2\geq 0,u_2 \in L^{\infty}(0,t_{2};\mathbb{U}) \text{ s.t. } x(t_2,u_2,x(t_1,u_1,x^0))\in \Psi\}
\end{align*}

\noindent In the following, we denote by $x(\cdot,x^0,u)$ the trajectory emanating from an initial value $x^0\in \R^n$ when applying a control $u\in L^\infty(0,T;\R^m)$. We first provide a result w.r.t.\ steady states in preparation of our turnpike theorem.

\begin{lem}
	\label{lem:ss}
	Every thermodynamic equilibrium is a controlled steady state when choosing $u\equiv 0$, that is, for all $\bar{x}\in \mathcal{T}$
	\begin{align*}
	x(t;\bar{x},0) = \bar{x} \qquad \forall t\geq 0.
	\end{align*}
\end{lem}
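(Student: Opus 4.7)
The strategy is to exhibit the constant trajectory $x(t)\equiv\bar x$ as a solution of the IPHS dynamics with $u\equiv 0$ and then invoke uniqueness. The key observation is that the entire drift term in \eqref{eq:RIPHS} carries $\{S,H\}_J(x)$ as a scalar factor, so whenever $\{S,H\}_J$ vanishes---which is exactly the defining property of $\mathcal{T}$---the drift vanishes as well.

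Concretely, I would proceed in three short steps. First, by the definition of $\mathcal{T}$, any $\bar x\in\mathcal{T}$ satisfies $\{S,H\}_J(\bar x)=0$. Second, substituting $x(t)\equiv\bar x$ and $u\equiv 0$ into the right-hand side of \eqref{eq:RIPHS} yields
\begin{equation*}
\gamma(\bar x,H_x(\bar x))\cdot\{S,H\}_J(\bar x)\cdot J H_x(\bar x)+g(\bar x,H_x(\bar x))\cdot 0 = 0,
\end{equation*}
which agrees with $\dot x(t)=0$ for the constant trajectory; hence $x(\cdot)\equiv\bar x$ is a valid solution of the initial value problem with initial datum $\bar x$ and input $u\equiv 0$. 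Third, invoking uniqueness of solutions to the Cauchy problem (implicit in the notation $x(\,\cdot\,;\bar x,0)$ and ensured by the $C^1$-regularity of $H_x$ via Assumption~\ref{as:ham}(a) together with continuity of $\gamma$), we conclude $x(t;\bar x,0)=\bar x$ for all $t\ge 0$.

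There is no serious obstacle; the statement is essentially a direct consequence of the factorized structure of the drift and of the definition of $\mathcal{T}$. The only subtle point is the appeal to uniqueness, which is standard under the regularity hypotheses of the paper but is worth mentioning explicitly since $\gamma$ is only assumed continuous---local Lipschitz continuity, however, follows once one works on any compact neighborhood of $\bar x$ where $H_{xx}$ is bounded.
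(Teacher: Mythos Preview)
Your proposal is correct and follows exactly the paper's approach: the paper's proof is the one-line observation that the right-hand side of \eqref{eq:RIPHS} vanishes at $(\bar x,0)$, which is precisely your second step. You add an explicit appeal to uniqueness of solutions, which the paper leaves implicit in the notation $x(\,\cdot\,;\bar x,0)$; this is a reasonable elaboration, though your remark that local Lipschitz continuity follows from continuity of $\gamma$ together with boundedness of $H_{xx}$ is not quite enough on its own (continuity of $\gamma$ does not imply Lipschitz continuity of the product), so if you want to be fully rigorous there you would need an additional regularity assumption on $\gamma$---but the paper itself does not address this point either.
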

\begin{proof}
	The proof immediately follows from the fact that the right-hand side of the dynamics \eqref{eq:RIPHS} vanishes when plugging in $(\bar{x},0)$.	
\end{proof}
We now state and prove the main result of this paper.

\begin{thm}
	\label{thm:turnpike1}
	Let Assumption~\ref{as:comp} hold.
	Then \eqref{eq:phOCP} has the integral manifold turnpike on 
	$S_\text{tp} = \mathcal{C}(\mathcal{T},\Psi)$ with respect to the manifold of thermodynamic equilibria $\mathcal{T}$.
\end{thm}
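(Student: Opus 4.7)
The plan is to obtain the turnpike via the standard upper/lower bound sandwich: construct a cheap feasible candidate that visits and rests on $\mathcal T$, so that its cost is uniformly bounded in $t_f$; then exploit the energy-entropy identity~\eqref{eq:energybalance} together with Lemma~\ref{lem:manifold_distance} to convert this upper bound into a uniform-in-$t_f$ bound on $\int_0^{t_f}\dist^2(x^\star (t),\calT)\,dt$.

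Fix a compact $K\subset \mathcal C(\calT,\Psi)$. For each $x^0\in K$, by the very definition of $\mathcal C(\calT,\Psi)$, there exist $t_1(x^0),t_2(x^0)\ge 0$, controls $u_1,u_2$ and a point $\bar x(x^0)\in\calT$ such that $u_1$ steers $x^0$ to $\bar x$ in time $t_1$ and $u_2$ steers $\bar x$ to $\Psi$ in time $t_2$. First I would argue (using continuity of the flow and Assumption~\ref{as:comp}-style considerations) that one may pick these data so that $t_1+t_2\le\tau_K$ for all $x^0\in K$, with the associated trajectories remaining in some compact set. Then, for horizons $t_f\ge\tau_K$, the candidate control that executes $u_1$ on $[0,t_1]$, then $u\equiv 0$ on $[t_1,t_f-t_2]$, and finally $u_2$ on $[t_f-t_2,t_f]$ is admissible. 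By Lemma~\ref{lem:ss}, the middle piece is constant on $\bar x\in\calT$, and since $\ell_{\alpha_1,\alpha_2}(x,0)=0$, the middle segment contributes nothing to the cost. The two transient pieces are integrals over intervals of length $\le\tau_K$ of a function continuous on a compact set, hence uniformly bounded. Consequently there is a constant $M_K$ with $J^\star(t_f)\le M_K$ for all $t_f\ge\tau_K$ and $x^0\in K$ (the remaining range of $t_f$ being bounded is handled trivially by Assumption~\ref{as:comp} and compactness of $\mathbb U$).

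For the lower bound, I use identity~\eqref{eq:energybalance} applied to the optimal pair $(x^\star ,u^\star )$:
\begin{align*}
\alpha_2 T_0\int_0^{t_f}\!\!\gamma(x^\star ,H_x(x^\star ))\{S,H\}_J^2(x^\star )\,dt
= J^\star (t_f) - \alpha_1[H(x^\star (t_f))-H(x^0)] - \alpha_2 T_0[S(x^0)-S(x^\star (t_f))].
\end{align*}
By Assumption~\ref{as:comp}, $x^\star (t)$ lies in a fixed compact set $\widetilde K$ for all $t\in[0,t_f]$ and all $t_f$, so $H$ and $S$ evaluated at the endpoints are bounded by a constant depending only on $\widetilde K$. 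Combined with the upper bound $J^\star (t_f)\le M_K$ this yields
$\alpha_2 T_0 \int_0^{t_f}\gamma\{S,H\}_J^2(x^\star )\,dt\le M_K'$
uniformly in $t_f$. Finally, since $x^\star (t)\in\widetilde K$, Lemma~\ref{lem:manifold_distance} gives a constant $c>0$ with $c\dist^2(x^\star (t),\calT)\le\gamma\{S,H\}_J^2(x^\star (t))$; integrating produces the desired bound $\int_0^{t_f}\dist^2(x^\star (t),\calT)\,dt\le C_K$.

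I expect two points to require care. The first is the existence of a \emph{uniform} bound $\tau_K$ on the reach-and-return times used to build the candidate; strictly speaking $\mathcal C(\calT,\Psi)$ only provides these pointwise, so one either has to add a mild uniformity assumption on $K$ or invoke a continuity/compactness argument on the reachable-set multifunction. The second, more substantive point is the dependence on $\alpha_2$: the argument above hinges on dividing by $\alpha_2 T_0$, which is exactly the dissipativity contribution contributed by the entropy part of the cost. In the purely energetic case $\alpha_2=0$, identity~\eqref{eq:energybalance} collapses to the conservation relation $J^\star (t_f)=\alpha_1[H(x^\star (t_f))-H(x^0)]$, which depends only on endpoints and carries no information about the interior of the trajectory; the theorem is meaningful only for $\alpha_2>0$ and the proof should be read under that tacit restriction.
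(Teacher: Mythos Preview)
Your proof is correct and follows essentially the same route as the paper: build the reach--rest--return candidate using Lemma~\ref{lem:ss}, apply the energy--entropy identity~\eqref{eq:energybalance} to compare the optimal trajectory with the candidate, bound the endpoint terms via Assumption~\ref{as:comp}, and invoke Lemma~\ref{lem:manifold_distance} on the compact set containing $x^\star$. Your two caveats---the need for a uniform bound $\tau_K$ on the reach-and-return times over the compact $K$, and the tacit restriction $\alpha_2>0$---are well taken; the paper's own proof divides by $\alpha_2 T_0$ and treats $x^0$ pointwise without addressing either point explicitly.
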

\begin{proof}
	Let $(x^*,u^*)$ be an optimal state-control pair of \eqref{eq:phOCP}. Then, by optimality, any control $u\in L^\infty(0,t_f,\mathbb{U})$ with corresponding state trajectory $x= x(\cdot,u,x^0)$ satisfies
	\begin{align*}
	\int_0^{t_f} \ell_{\alpha_1,\alpha_2}(x^*(t),u^*(t))\,\text{d}t \leq  \int_0^{t_f} \ell_{\alpha_1,\alpha_2}(x(t),u(t))\,\text{d}t.
	\end{align*}
	Abbreviating $R(x) := \gamma(x,H_x(x)) \{S,H\}^2_J(x)$, invoking \eqref{eq:energybalance} on both sides of the above inequality and cancelling the terms depending on $x^0$ we obtain
	\begin{align*}
	&\alpha_1H(x^*(t_f)) + \alpha_2 T_0\left(-S(x^*(t_f)) + \int_0^{t_f} R(x^*(t))\,\text{d}t\right)\\
&\qquad \qquad\qquad\qquad \leq \alpha_1H(x(t_f)) + \alpha_2 T_0\left(-S(x(t_f)) + \int_0^{t_f} R(x(t))\,\text{d}t\right).
	\end{align*}
	We now will construct a suitable control such that the right-hand side is bounded uniformly in $t_f$. To this end, by $x^0 \in \mathcal{C}(\mathcal{T},\Psi)$, we get the existence of $t_1,t_2>0$ and corresponding controls $u_1,u_2$ that steer the initial state into the manifold and to the terminal region, respectively. W.l.o.g., we can assume that $t_f \geq t_1+t_2$, as the turnpike inequality \eqref{e:integral_tp} is of purely asymptotic nature in the horizon and choose \begin{align*}
	u(t):=\begin{cases}
	u_1(t) \qquad &t\in [0,t_1]\\
	0 \qquad& t \in (t_1,t_f-t_2)\\
	u_2(t-(t_f-t_2)) \qquad &t\in [t_f-t_2,t_f].
	\end{cases}
	\end{align*}
	Thus, using Lemma~\ref{lem:ss}, we have $x(t,x^0,u) = \bar{x}\in \mathcal{T}$ for all $t\in (t_1,t_f-t_2)$. Hence, abbreviating $x(t) = x(t,x^0,u)$, we have
	\begin{align*}
	\int_0^{t_f}R(x(t))\,\text{d}t 
	= \int_0^{t_1}R(x(t))\,\text{d}t +\int_{t_f-t_2}^{t_2}R(x(t))\,\text{d}t 
	\leq c_1(t_1,t_2,u_1,u_2,x^0,\bar{x}).
	\end{align*}
	Using non-negativity of the Hamiltonian from below we get
	\begin{align*}
	\int_0^{t_f}R(x^*(t))\,\text{d}t \leq \tfrac{1}{\alpha_2T_0}\left(\alpha_1H(x(t_f))+c_1\right)+ \|l\|\left(\|x^*(t_f)\| + \|x(t_f)\|\right) 
	\end{align*}
	Further, by uniform boundedness of the trajectories in the horizon $t_f$, cf.\ Assumption~\ref{as:comp}, we conclude
	\begin{align*}
	\int_0^{t_f}\gamma(x^*(t),H_x(x^*(t))) \{S,H\}^2_J(x^*(t))\,\text{d}t=  \int_0^{t_f}R(x^*(t))\,\text{d}t \leq c_2
	\end{align*}
	with a constant $c_2>0$ independent of $t_f$. Denoting by $c$ the constant appearing in the lower bound of the estimate in Lemma~\ref{lem:manifold_distance}, we obtain
	\begin{align*}
	\int_0^{t_f}\dist\left(x^*(t),\mathcal{T}\right)\,\text{d}t \leq \tfrac{c_2}{c},
	\end{align*}
	which is the turnpike property.
\end{proof}

\begin{rem}[Relation to optimal steady states]
	\label{rem:ss}
	\noindent The steady-state problem corresponding to \eqref{eq:phOCP} reads
	\begin{align*}
	&\min_{(x,u)\in  \mathbb{R}^n\times \mathbb{U}} (\alpha_1y_H + \alpha_2T_0y_S)^\top u\\
	\text{s.t. }0 &= \gamma\left(x,\tfrac{\partial H}{\partial x}\right)\left\{ S,H\right\} _{J}J\frac{\partial H}{\partial x}(x)+g\left(x,\tfrac{\partial H}{\partial x}\right)u.
	\end{align*}
	It can be immediately seen that the choice $u=0$ leads to zero cost. Further, as the energy balance \eqref{eq:energybalance} also holds for steady states this problem is equivalent to 
	\begin{align*}
	&\min_{(x,u)\in  \mathbb{R}^n\times \mathbb{U}} \alpha_2T_0\gamma\left(x,\tfrac{\partial H}{\partial x}\right) \left\{ S,H\right\}_J^2\\\
	\text{s.t. }0 &= \gamma\left(x,\tfrac{\partial H}{\partial x}\right)\left\{ S,H\right\} _{J}J\frac{\partial H}{\partial x}(x)+g\left(x,\tfrac{\partial H}{\partial x}\right)u.
	\end{align*}
	Any optimal control $u$ has to have a cost of at most zero, which means by nonnegativity of the cost functional that $\alpha_2T_0\gamma\left(x,\tfrac{\partial H}{\partial x}\right) \left\{ S,H\right\}_J^2=0$, i.e, the optimal state is contained in $\mathcal{T}$. Hence, in order to be feasible, the corresponding optimal control has to satisfy $g(x,\tfrac{\partial H}{\partial x})u=0$.
	Thus, the set of optimal steady states is given by
	\begin{align*}
	&\{(x,u)\in \mathcal{T}\times \mathbb{U}\,|\,g(x,H_x(x))u=0\}.
	\end{align*}

\end{rem}

\section{Set-point transition for a heat exchanger}
\label{sec:numres}
In this part, we present a numerical case study for the heat exchanger from Subsection~\ref{subsec:heat} with entropy flow control~\eqref{eq:entcont} to illustrate the manifold turnpike result of Theorem~\ref{thm:turnpike1}. We note that, here, the manifold of thermodynamic equilibria is actually a subspace as $\{S,H\}_J(x) = T_1(S_1)-T_2(S_2)$ and thus
$$
\mathcal{T} = \{(S_1,S_2) : T_1(S_1) = T_2(S_2)\} = \{(S_1,S_2) : S_1=S_2\}
$$
where the last equality follows from the injectivity of the exponential function that defines the temperature-entropy relation.

In this part, we set $T_\text{ref}=c_1=c_2=1$ and $S_\text{ref} = 0$ and obtain the temperature-entropy relation $T_i = e^{S_i}$, $i=1,2$.

\begin{figure}[t]
	\centering
	\includegraphics[width=\linewidth]{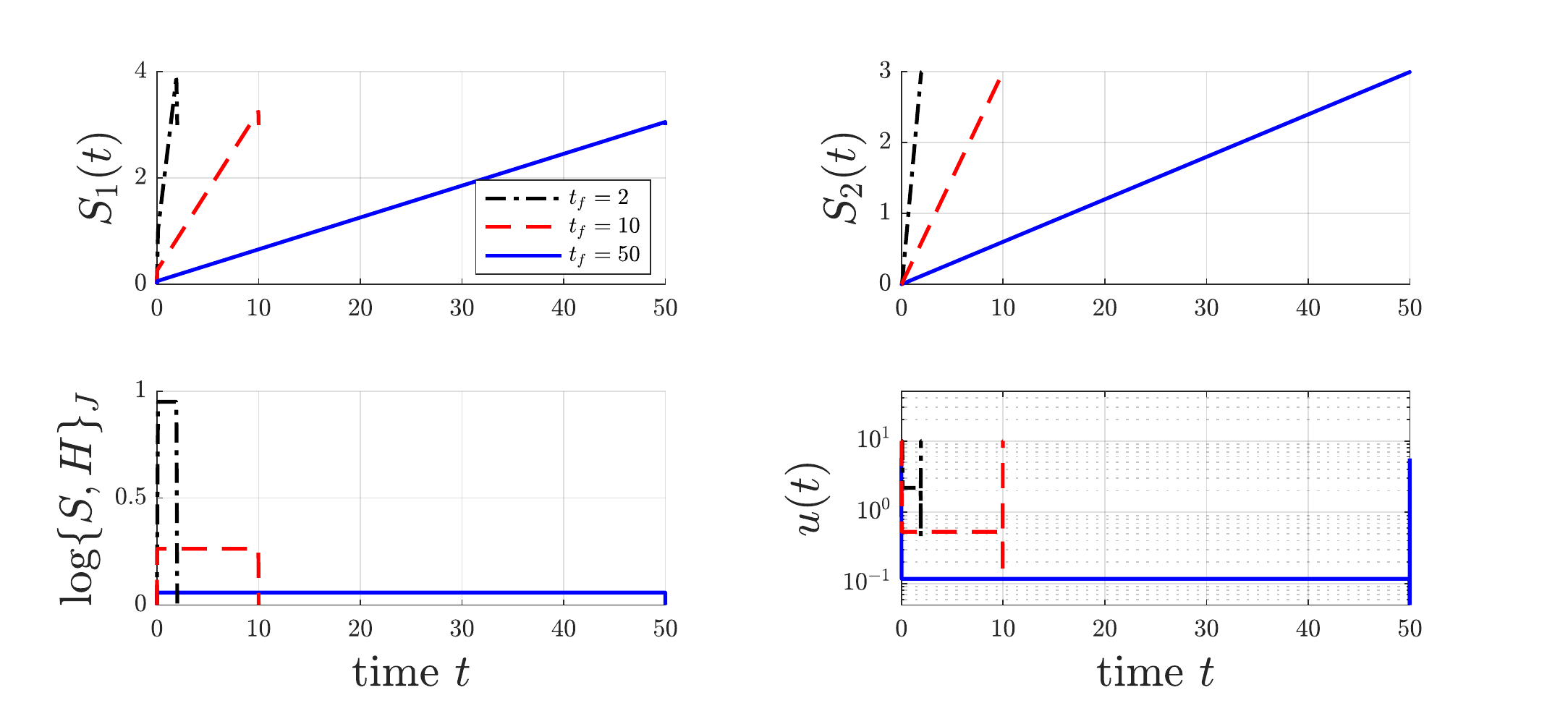}
	\includegraphics[width=0.8\linewidth]{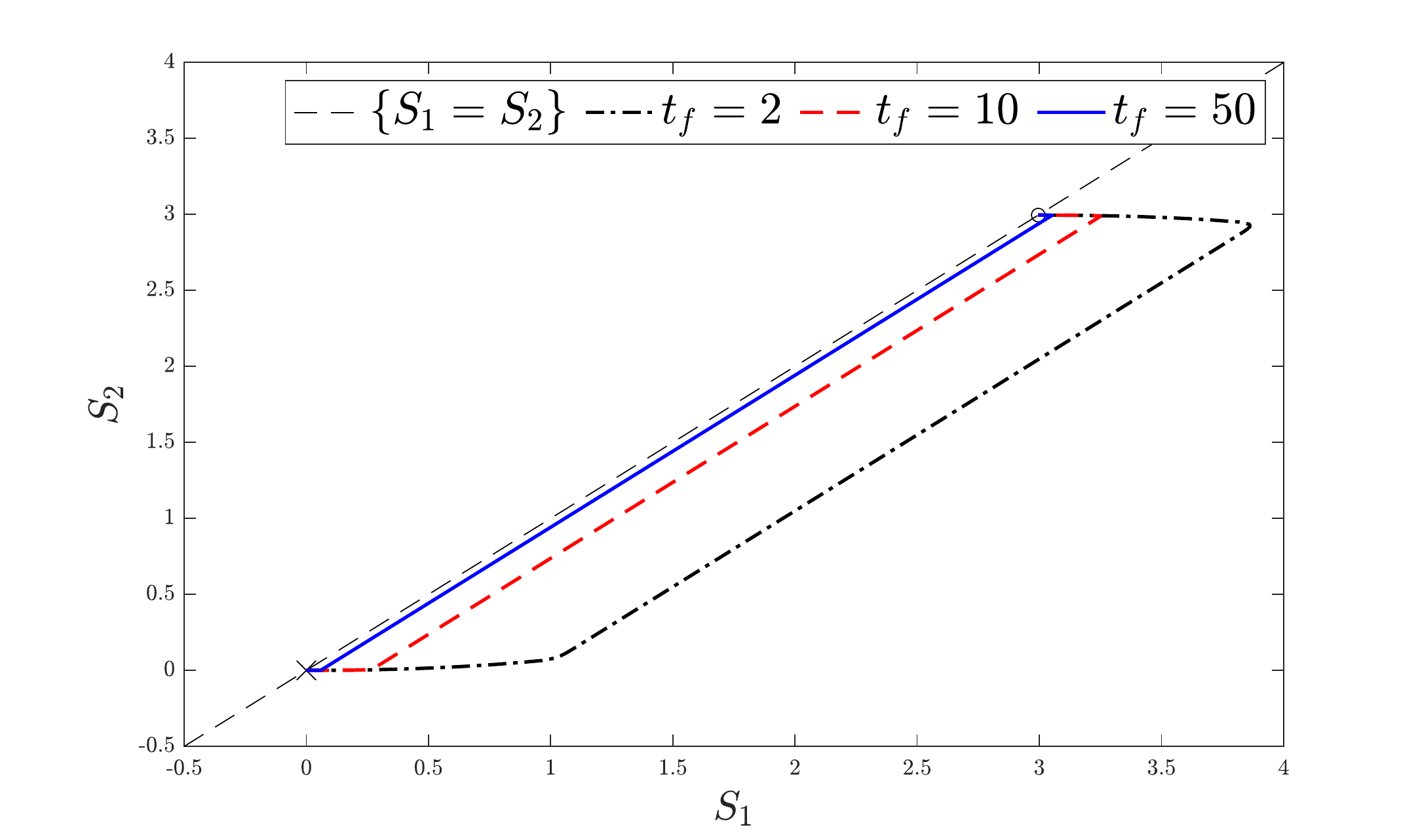}
	\caption{Depiction of the optimal intensive variable $S$ over time (top) and in phase space (bottom).}
	\label{fig:entropycontrol_overtime_ent}
\end{figure}

We consider the control constraint set  $\mathbb{U}  = [-10,10]$ and aim to perform a state transition within two thermodynamical equilibria:
$
T_1^0 = T_2^0 = 1 \quad \text{and} \quad T_1^{t_f}=T_2^{t_f} = 20$. 

In terms of the optimal control problem~\eqref{eq:phOCP}, this corresponds to the initial value in entropy variables $x^0 = \begin{pmatrix}
0&0
\end{pmatrix}^\top$ and the terminal set $\Psi = \left\{\begin{pmatrix}
\ln{20}&\ln{20}
\end{pmatrix}^\top\right\}$, by means of the relation $T_i = e^{S_i}$, $i=1,2$.

It is clear that the state transition is only possible through providing heat~--- or, equivalently, entropy~---~to the first compartment, cf.\ Figure~\ref{fig:heat}.

In Figure~\ref{fig:entropycontrol_overtime_ent}, we observe the distance of optimal state trajectories to the set of thermodynamic equilibria. We can not steer the system from initial to terminal state on this set, as by the form of the input vector in \eqref{eq:entcont}, no control action that is non-zero leaves $\mathcal{T}$ invariant. However, for increasing time horizons, the state trajectories remain closer and closer to the manifold, as the necessary control action, that is, the entropy flow, can be chosen smaller and smaller. Furthermore, we observe a turnpike behavior of the control towards zero as this is the only control that leaves the set of thermodynamic equilibria invariant, cf.\ Remark~\ref{rem:ss}.
The states, i.e., the individual entropies in the compartments depicted in the upper plot of Figure~\ref{fig:entropycontrol_overtime_ent} indicate a velocity turnpike, cf.\ \cite{FaulFlasOberWort21,pighin2020turnpike}, that is, their velocity is constant for the majority of the time interval. This can be explained as the Poisson bracket is mostly constant and small --- the state has a turnpike towards the manifold --- and the control is mostly constant and small --- the zero control is the only control that leaves this manifold invariant --- and thus the dynamics \eqref{eq:RIPHS} imply that $\dot{x}_1 = \dot{S}_1\approx \text{const.}$ and $\dot{x}_2=\dot{S}_2\approx \text{const.}$.

We depict the corresponding quantitites extensive variable $H_x(S_1,S_2)=(T_1,T_2)$ in Figure~\ref{fig:entropycontrol_overtime_temp}. Here, we observe---due to the algebraic relation $T=\exp(S)$---an exponential behaviour in the upper plot of Figure~\ref{fig:entropycontrol_overtime_temp}. In the lower plot, we can observe that, the temperature is moving further and further away from the subspace. The reason is that, by means of the turnpike property, we have an optimal rate of travel in the state variable, that is, e.g. for the first compartment, $\text{const.}=\dot{S}_1 = \tfrac{T_1-T_2}{T_1}$. For increasing temperatures $T_1$, the latter fraction can only be constant if also $T_1-T_2$ increases.
\begin{figure}[t]
	\centering
	\includegraphics[width=\linewidth]{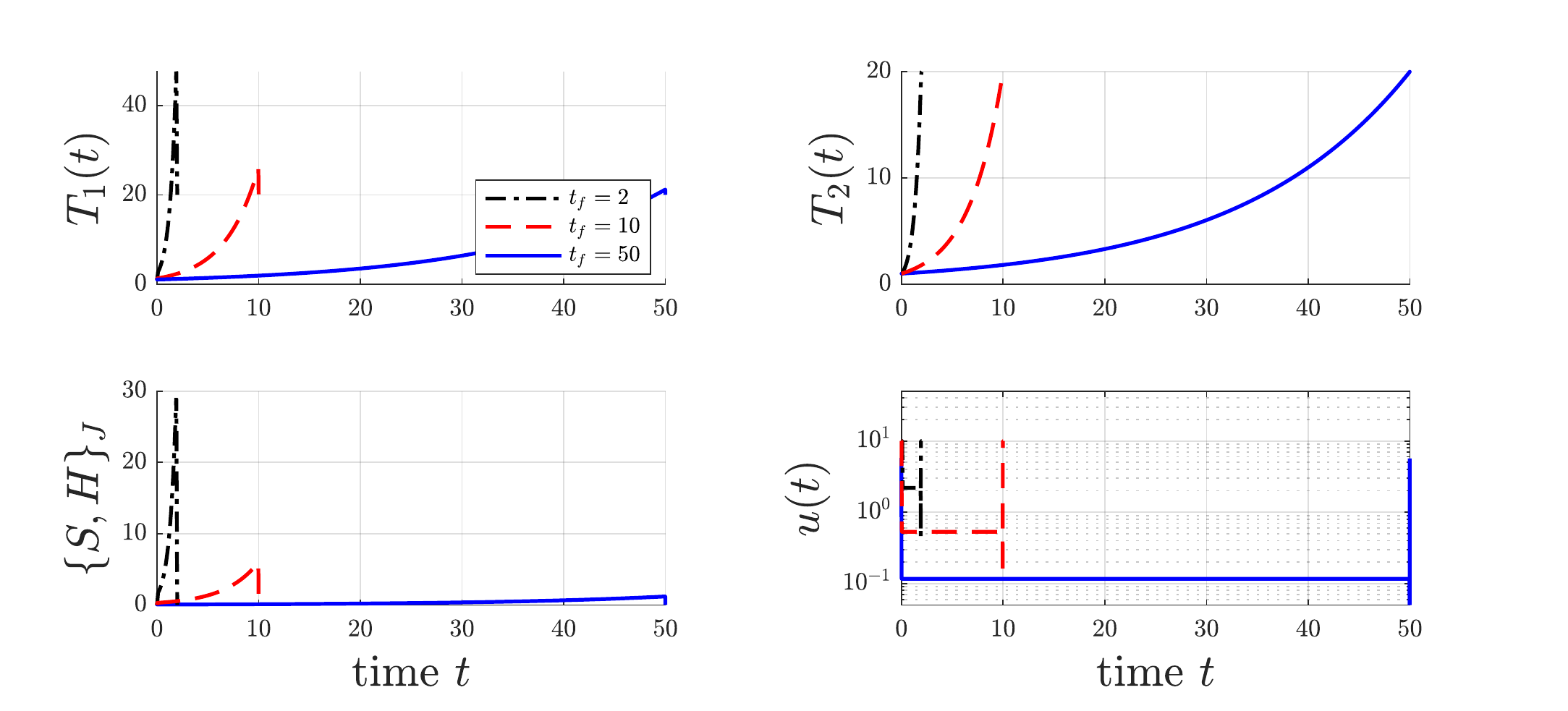}
	\includegraphics[width=.8\linewidth]{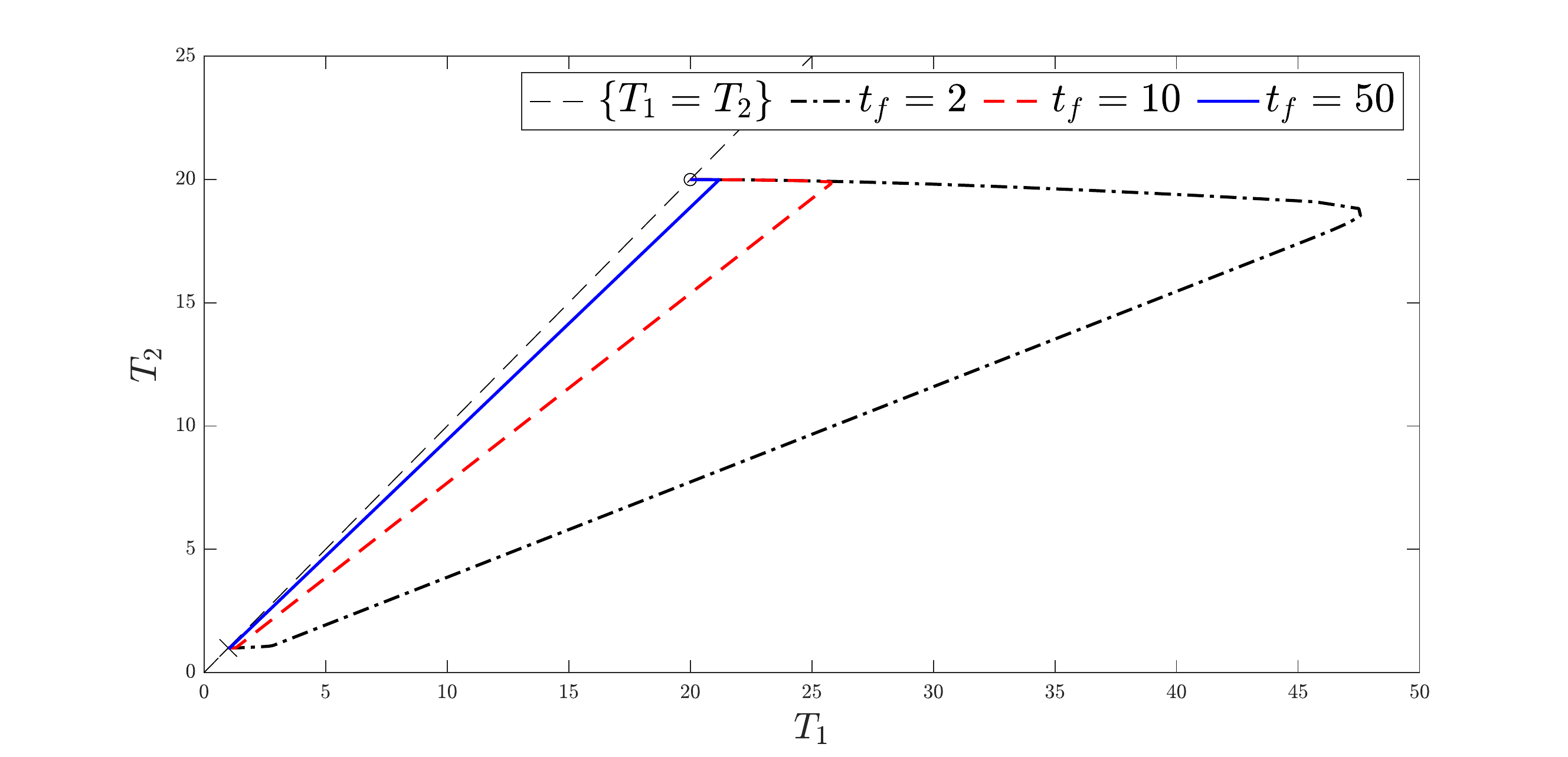}
	\caption{Depiction of the optimal extensive variable $T = H_x$ over time (top) and in phase space (bottom).}
	\label{fig:entropycontrol_overtime_temp}
\end{figure}

\section{Discussion and conclusion}\label{subsec:disc}

This paper considered optimal state transitions for irreversible port-Hamiltonian systems with minimal supply of energy, entropy, or exergy. We proved a manifold turnpike property w.r.t.\ 
the set of thermodynamical equilibria. Moreover, we numerically illustrated our findings drawing upon the example of a heat exchanger.

Future research will consider the relaxation of Assumption~\ref{as:comp} used for proving the turnpike theorem. To this end, we aim to show sufficient conditions via a refined argument using optimality, dissipativity, and properties of the input vector field. Put differently, we will investigate the lack of forward invariance of the manifold of thermodynamic equilibria which is related to $g(x,H_x)\notin T_x\mathcal{T}$ where $T_x\mathcal T$ is the tangent space of the manifold $\mathcal{T}$ at $x$. 



\bibliographystyle{abbrv}
\bibliography{references.bib}

\appendix
\section*{Appendix -- Proof of Lemma~\ref{lem:manifold_distance}}
\noindent We will first prove that
	\begin{align}\label{e:proofeq1}
	c\|x_1-x_2\|\leq \left\|\tfrac{\partial H}{\partial x}(x_1)-\tfrac{\partial H}{\partial x} (x_2)\right\|\leq C\|x_1-x_2\|
	\end{align}
	holds for all $x_1,x_2\in K$. To show the second inequality, we compute
	\begin{align*}
	\left\| H_x(x_1)-H_x (x_2)\right\|&\le\Big(\sup_{\xi\in\operatorname{conv}(K)}\left\|H_{xx}(\xi)\right\|\Big)\cdot\|x_1-x_2\| \\&= C\|x_1-x_2\|, 
	\end{align*}
	where $\operatorname{conv}(K)$ denotes the convex hull of $K$. For the first inequality of \eqref{e:proofeq1} we note that the inverse $[H_{xx}(x)]^{-1}$ is continuous and thus in particular bounded on the compact set $H_x^{-1}\left(\conv(H_x(K))\right)$. Hence, for $z_1,z_2\in H_x(K)$ we have
	\begin{align*}
	&\left\|H_x^{-1}(z_1)-H_x^{-1}(z_2)\right\|
	\\&\le\Big(\sup_{\xi\in\conv(\frac{\partial H}{\partial x}(K))}\left\|(H_x^{-1})'(\xi)\right\|\Big)\|z_1-z_2\|\\
	&= \Big(\sup_{\xi\in\conv(H_x(K))}\left\|\left[H_{xx}(H_x^{-1}(\xi))\right]^{-1}\right\|\Big)\|z_1-z_2\| 
	\\&=  c^{-1}\|z_1-z_2\|,
	\end{align*}
	which is equivalent to $\|H_x(x_1)-H_x(x_2)\|\ge c\|x_1-x_2\|$. This proves \eqref{e:proofeq1}.\\
	Now, for $x,v\in\R^n$ we have $|v^\top x| = \|v\|\dist(x,v^\perp)$ and therefore
	\begin{align*}
	\big|\{S(x),H(x)\}_{J}\big|
	&=|H_x(x)^\top Jl|
	= \|Jl\|\dist\left(H_x(x), (Jl)^\perp\right) \\&
	= \|Jl\|\inf_{z\in (Jl)^\perp}\left\| H_x(x)-z\right\|\\
	&= \|Jl\|\inf_{z\in (Jl)^\perp}\left\|H_x(x) - H_x(H_x^{-1}(z))\right\|.
	\end{align*}
	Hence, by \eqref{e:proofeq1},
	\begin{align*}
	\big|\{S(x),H(x)\}_{J}\big| &\le C\inf_{z\in (Jl)^\perp}\|x-H_x(x)^{-1}(z)\| \\&= C\inf_{w\in \mathcal{T}}\|x-w\| = C\dist(x,\mathcal{T})
	\end{align*}
	and
	\begin{align*}
	\big|\{S(x),H(x)\}_{J}\big| &\ge c\inf_{z\in (Jl)^\perp}\|x-H_x(x)^{-1}(z)\| \\&= c\inf_{w\in \mathcal{T}}\|x-w\| = c\dist(x,\mathcal{T}).
	\end{align*}
	The claim now follows from the fact that $\gamma$ is continuous and positive and $K$ is compact.\qed

\end{document}